\definecolor{DarkDesaturatedBlue}{HTML}{3A3556}
\definecolor{VividOrange}{HTML}{F15918}
\definecolor{PureOrange}{HTML}{FFBA00}
\definecolor{LightGrayishPink}{HTML}{EEC5D5}
\definecolor{VerySoftBlue}{HTML}{B5AFDB}
\newtheorem{thm}{Theorem}
\newtheorem{lemma}{Lemma}
\newtheorem{corollary}{Corollary}
\theoremstyle{definition}
\newtheorem*{definition*}{Definition}
\newcommand{\N}{\mathcal{N}}
\newcommand{\C}{\mathscr{C}}
\newcommand{\bc}{\mathbf{c}}
\newcommand{\bk}{\mathbf{k}}
\newcommand{\eps}{\varepsilon}
\newcommand{\pr}[1]{\mathbb{P}\left[#1\right]}
\newcommand{\esp}[1]{\mathbb{E}\left[#1\right]}
\newcommand{\pth}[1]{\left(#1\right )}
\newcommand{\restrict}[2]{{#1}_{|{#2}}}
\title{Colouring locally sparse graphs with the first moment method}
\author[E.\ Hurley]{Eoin Hurley}
\address{Universität Heidelberg, Germany.}
\email{hurley@informatik.uni-heidelberg.de}
\thanks{The research leading to these results was partially supported by the  Deutsche Forschungsgemeinschaft (DFG, German Research Foundation) -- 428212407 (E. Hurley)}
\author[F.\ Pirot]{Fran\c{c}ois Pirot}
\address{Université Paris-Saclay, France.}
\email{francois.pirot@lisn.fr}
\begin{document}
\begin{abstract}
We give a short proof of a bound on the list chromatic number of graphs $G$ of maximum degree $\Delta$ where each neighbourhood has density at most $d$, namely $\chi_\ell(G) \le (1+o(1)) \frac{\Delta}{\ln \frac{\Delta}{d+1}}$ as $\frac{\Delta}{d+1} \to \infty$. This bound is tight up to an asymptotic factor $2$, which is the best possible barring a breakthrough in Ramsey theory, and strengthens results due to Vu, and more recently Davies, P., Kang, and Sereni. Our proof relies on the first moment method, and adapts a clever counting argument developed by Rosenfeld in the context of non-repetitive colourings. 
As a final touch, we show that our method provides an asymptotically tight lower bound on the number of colourings of locally sparse graphs.

\end{abstract}

\maketitle

\section{Introduction}
Given a  graph $G$, 
a \emph{list assignment} of $G$ is a map $L\colon V(G) \rightarrow 2^{\mathbb{N}}$, and a \emph{proper $L$-colouring} $c$ of $G$ is a map $c\colon V(G)\rightarrow \mathbb{N}$ such that $c(v)\in L(v)$ for every vertex $v \in V(G)$, and $c(u)\neq c(v)$ for every edge $uv\in E(G)$. If there exists an $L$-colouring of $G$, we say that $G$ is \emph{$L$-colourable}.
The \emph{list chromatic number} of $G$, denoted $\chi_\ell(G)$, is the minimum $k$ such that $G$ is $L$-colourable for every list assignment $L$ with $|L(v)|\ge k$ for every vertex $v\in V(G)$.

In a seminal result, Johansson \cite{Joh96} showed that the list chromatic number of triangle-free graphs of maximum degree $\Delta$ never exceeds $O(\Delta/\ln \Delta)$ as $\Delta \to \infty$. Two decades later, Molloy \cite{Mol19} showed with the help of entropy compression that this bound can be tightened to $(1+o(1))\Delta/\ln \Delta$ as $\Delta\to\infty$;  Bernshteyn \cite{Ber19} then showed that it was possible to replace the use of entropy compression with an application of the lopsided Lov\' asz Local Lemma and obtain a similar result. Following that breakthrough, there has been a lot of interest for triangle-free graph colourings and extensions \cite{AIS19, ABD21+, BKNP18+, DJKP20, DKPS20+}.

Although quite different, all the proofs in those works rely on a $2$-step colouring procedure. 
The first step is the one that varies the most among those proofs; given a graph $G$ and a list assignment $L$, it consists in constructing a (pseudo-)random partial $L$-colouring $c_0$ of $G$. In the work of Johansson \cite{Joh96}, that partial colouring was obtained with an iterative colouring procedure and analysed with a nibble argument, while more recent proofs rely on a more straightforward construction of that partial colouring; for instance Bernshteyn picks a partial colouring uniformly at random in his proof \cite{Ber19}.
After that first step, the set $V_1$ of uncoloured vertices induces a new list assignment $L_1\colon V_1 \to 2^\mathbb{N}$. After showing that with positive probability, the maximum colour degree in that instance is sufficiently smaller than the minimum list size, the second step consists in choosing simultaneously one colour at random from $L_1(v)$ for each uncoloured vertex $v\in V_1$; this returns a proper $L_1$-colouring of $G[V_1]$, 
and hence a proper $L$-colouring of $G$, 
with positive probability as can be shown using the Lov\'asz Local Lemma. This second step is usually called the Finishing Blow, and follows from a result of Haxell \cite{Hax01}.  
\begin{lemma}[Haxell, 2001] 
Let $H$ be a graph given with a list assignment $L\colon V(H) \to 2^\mathbb{N}$. Suppose that there exists some integer $\ell$ such that $|L(v)| \ge \ell$ for every vertex $v\in V(H)$, and that for every colour $x\in L(v)$, the number of neighbours $u$ of $v$ such that $x\in L(u)$ is at most $\ell/2$. Then there exists a proper $L$-colouring of $H$. 
\end{lemma}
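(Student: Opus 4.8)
The plan is to recast the statement as a question about independent transversals and then invoke (with a sketch of its proof) Haxell's theorem on independent transversals. First one may assume $H$ is finite and $|L(v)|=\ell$ for every $v$, by discarding surplus colours from the lists. Build an auxiliary graph $\mathcal{G}$ with vertex set $\{(v,x) : v\in V(H),\, x\in L(v)\}$, partitioned into the \emph{clouds} $P_v := \{v\}\times L(v)$, and with an edge between $(u,x)$ and $(v,x)$ whenever $uv\in E(H)$. A proper $L$-colouring of $H$ is exactly an \emph{independent transversal} of $\mathcal{G}$, i.e.\ an independent set meeting every cloud $P_v$ in precisely one vertex. By construction $|P_v|=\ell$, while the degree of $(v,x)$ in $\mathcal{G}$ equals the number of neighbours $u$ of $v$ with $x\in L(u)$, which by hypothesis is at most $\ell/2$; hence $\ell\ge 2\Delta(\mathcal{G})$, so every cloud has size at least $2\Delta(\mathcal{G})$. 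It therefore suffices to prove the following theorem of Haxell: \emph{any graph $G$ whose vertex set is partitioned into parts each of size at least $2\Delta(G)$ admits an independent transversal.}

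To prove that, I would argue by maximal augmentation. Let $I$ be a partial independent transversal — an independent set with $|I\cap P|\le 1$ for every part $P$ — meeting as many parts as possible, and suppose for contradiction it misses a part $P_0$. Then no vertex of $P_0$ may be added to $I$, so every vertex of $P_0$ has a neighbour in $I$. Starting from $P_0$, grow a sequence of parts $P_0,P_1,P_2,\dots$ together with tentative ``swap candidates'': at each step the current part is dominated by the current independent set, so we attempt to remove a blocking vertex $u\in I$ (lying in some not-yet-visited part $P_t$) and insert a vertex of the previous part, which frees $P_t$; if $P_t$ can then be re-covered we stop, and otherwise $P_t$ becomes the new current part and we iterate, each time choosing the swap so as to minimise the number of vertices of any single part that become ``burnt'' (blocked by a vertex we have committed to keeping). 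Since $G$ is finite the process halts, and halting yields an independent transversal covering strictly more parts than $I$, contradicting maximality — unless the sequence is forced to revisit a part it has already used.

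The crux is exactly this last eventuality, and the bookkeeping that excludes it. A part can be revisited only if all of its at least $2\Delta$ vertices have been burnt; but each vertex we commit to burns at most $\Delta$ vertices of any fixed part, and a careful accounting — showing that only committed vertices from essentially two consecutive generations of the search can ``point into'' a given part — bounds the total number of burnt vertices of that part by $2\Delta$, a contradiction. Pinning down the right invariant maintained by the greedy choice of swaps, so that this counting goes through cleanly, is the main obstacle; everything else is formal. (Alternatively one could deduce Haxell's theorem from the Aharoni–Berger–Ziv topological connectivity bound for independence complexes, but the combinatorial route is self-contained and fits the spirit of this paper.)
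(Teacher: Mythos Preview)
The paper does not prove this lemma at all; it is merely stated with attribution to Haxell~(2001) and used as historical context to explain the ``Finishing Blow'' step in earlier proofs --- a step the present paper explicitly avoids. So there is no proof in the paper to compare your attempt against.

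That said, your reduction to independent transversals is correct and is exactly the standard way to view Haxell's lemma: the auxiliary graph $\mathcal{G}$ has maximum degree at most $\ell/2$ by hypothesis, each cloud has size $\ell \ge 2\Delta(\mathcal{G})$, and a proper $L$-colouring is precisely an independent transversal. What follows, however, is not a proof but a sketch with the key step left open, as you yourself concede. Your description of the augmenting process captures the right flavour of Haxell's combinatorial argument, but the assertion that ``only committed vertices from essentially two consecutive generations of the search can point into a given part'' is not how the actual invariant works; Haxell's bookkeeping is genuinely more delicate (she maintains a tree of alternating classes and bounds the total ``damage'' via a careful induction on the tree structure), and the informal counting you propose would not close as stated. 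If this were submitted as a proof, the reduction would stand but the transversal theorem would need either a full argument or --- as the paper does --- a citation.
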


In contrast, our proof does not require the use of the Finishing Blow, and is the first of that nature to the best of our knowledge. 
Our focus is on graphs where the density is bounded within each neighbourhood, a natural extension of triangle-free graphs. The first result in that vein is due to Alon, Krivelevich, and Sudakov \cite{AKS99}. 
They relied on the result of Johansson in order to prove that if a graph $G$ of maximum degree $\Delta$ is such that each neighbourhood spans at most $\Delta^2/f$ edges, where $1\le f \le  \Delta^2+1$, then its chromatic number is at most $O(\Delta/\ln f)$ as $f\to \infty$. This result was then extended to the list chromatic number by Vu \cite{Vu02}.
More recently, it has been proved by the second author together with Davies, Kang, and Sereni \cite[Corrolary 24]{DKPS20+}, that this upper bound can be tightened to $(2+\eps) \Delta/ \ln f$, provided that $f \ge (\ln \Delta)^{2/\eps}$, for every $\eps>0$.
Our main theorem is a strengthening of these results which is close to being optimal.

\begin{thm}\label{thm:main}
Let $G$ be an $n$-vertex graph of maximum degree $\Delta$ such that every graph induced by a neighbourhood in $G$ has average degree at most $d\le \frac{\Delta}{6}-1$. Write $\rho \coloneqq \Delta/(d+1)$ and let $\ell \ge (d+1)(\ln \rho)^3$. Then for every list assignment  $L\colon V(G)\to 2^\mathbb{N}$ with 
\[|L(v)| \ge \pth{1+\frac{2}{\ln \rho}} \frac{\deg(v)}{W\pth{\frac{\deg(v)}{\ell}}}\]
for every vertex $v\in V(G)$,
 there are at least $\ell^n$ proper $L$-colourings of $G$.
\end{thm}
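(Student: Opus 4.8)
The plan is to run a first-moment induction that colours the vertices one at a time. Fix an ordering $v_1,\dots,v_n$ of $V(G)$, write $G_i\coloneqq G[\{v_1,\dots,v_i\}]$, let $B_i\coloneqq N_G(v_i)\cap\{v_1,\dots,v_{i-1}\}$ be the back-neighbourhood of $v_i$, let $\delta_i\coloneqq\deg_G(v_i)$, and let $N_i$ be the number of proper $L$-colourings of $G_i$ (with $N_0\coloneqq1$). It suffices to prove $N_i\ge\ell\,N_{i-1}$ for every $i$. Two facts will be used: $|B_i|\le\delta_i$, and — crucially — $G[B_i]$ is an induced subgraph of $G[N_G(v_i)]$, so it spans at most $\tfrac d2\,\delta_i$ edges. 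Alongside the bound $N_i\ge\ell\,N_{i-1}$ I would carry an auxiliary invariant describing how spread out a uniformly random proper colouring is — say, that deleting a vertex divides the number of proper colourings by at least $\ell$, equivalently that every marginal $\pr{c(v_j)=x}$ is at most $1/\ell$ — since this is what makes the dependency bookkeeping in the counting step go through; it is consistent with the list hypothesis because $\delta\mapsto\delta/W(\delta/\ell)$ is increasing.

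For the inductive step, write
\[
N_i \;=\;\sum_{c\in\mathcal C(G_{i-1})}\bigl|L(v_i)\setminus\{\,c(u):u\in B_i\,\}\bigr|
\;=\;|L(v_i)|\,N_{i-1}-\sum_{x\in L(v_i)}\#\bigl\{c\in\mathcal C(G_{i-1}): x\in c(B_i)\bigr\},
\]
so the goal reduces to showing $\sum_{x\in L(v_i)}\#\{c: x\in c(B_i)\}\le(|L(v_i)|-\ell)\,N_{i-1}$; equivalently, a uniformly random $c\in\mathcal C(G_{i-1})$ uses, in expectation, at most $|L(v_i)|-\ell$ distinct colours of $L(v_i)$ on $B_i$. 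The heuristic — which also explains the exact shape of the hypothesis — is that a random proper colouring disperses colours over $B_i$ \emph{as if} each of the $|B_i|\le\delta_i$ vertices independently picked a uniform colour from $L(v_i)$: in that balls-in-boxes experiment the number of empty boxes is about $|L(v_i)|\,e^{-\delta_i/|L(v_i)|}$, and the requirement that this be at least $\ell$ is exactly $|L(v_i)|\ge\delta_i/W(\delta_i/\ell)$, matching the hypothesis up to the factor $1+\tfrac2{\ln\rho}$. Note that a naïve union bound on $\pr{x\in c(B_i)}$ is hopeless here, since $\delta_i$ may vastly exceed $|L(v_i)|$.

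To make this rigorous I would use Rosenfeld's counting idea. Partition the colourings $c$ with $x\in c(B_i)$ according to the set $S\coloneqq c^{-1}(x)\cap B_i$, which is necessarily a nonempty independent set of $G[B_i]$; deleting $S$ leaves a proper colouring of $G_{i-1}-S$ that must avoid the colour $x$ on all of $N_{G_{i-1}}(S)$. Using the auxiliary marginal invariant to account both for the event that $S$ is monochromatic in $x$ and for the suppression coming from avoiding $x$ on a large set, one bounds $\#\{c: c^{-1}(x)\cap B_i=S\}$ by $N_{i-1}$ times a quantity decaying geometrically in $|S|$; summing over $S$ and over $x\in L(v_i)$ yields the desired inequality, the relevant geometric series converging precisely because $G[B_i]$ is locally sparse. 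The error incurred is dominated by a term of order $\tfrac d2\,W(\delta_i/\ell)=O(d\ln\rho)$ — reflecting the empty boxes lost because the edges of $G[B_i]$ forbid collisions — which is absorbed by the hypothesis $\ell\ge(d+1)(\ln\rho)^3$, while $d\le\tfrac\Delta6-1$ guarantees $\rho\ge6$ so that all the $O(1/\ln\rho)$ slack terms (in particular the factor $1+\tfrac2{\ln\rho}$) stay under control. I expect this last step — controlling the dependencies in the count tightly enough that the recursion closes at exactly the Lambert-$W$ threshold, rather than at some constant factor worse — to be the main obstacle; once $N_i\ge\ell\,N_{i-1}$ is established and the auxiliary invariant re-derived for $G_i$, the induction gives $N_n\ge\ell^n$, which is the theorem.
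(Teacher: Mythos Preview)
Your inductive setup matches the paper's: one shows $\esp{\ell_{\bc}(v_i)}\ge\ell$ for uniform $\bc\in\C(G_{i-1})$, and the marginal bound $\pr{\bc(u)=x}\le 1/\ell$ is indeed a consequence of the hypothesis $|\C(H)|\ge\ell\,|\C(H\setminus u)|$. The gap is in the mechanism you propose for the key step. Partitioning by $S=\bc^{-1}(x)\cap B_i$ and using the invariant gives at best $\#\{c:c(u)=x\text{ for all }u\in S\}\le N_{i-1}/\ell^{|S|}$; summing over nonempty independent $S\subseteq B_i$ then bounds $\pr{x\in\bc(B_i)}$ by the independence polynomial $I(G[B_i],1/\ell)-1$, which for sparse $G[B_i]$ is essentially $(1+1/\ell)^{\delta_i}-1$ and blows up as soon as $\delta_i\gg\ell$. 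Local sparsity goes the \emph{wrong} way here --- fewer edges in $G[B_i]$ means more independent sets, not fewer --- so the series cannot converge for the reason you state. And the extra suppression you hope to extract from ``avoiding $x$ on $N_{G_{i-1}}(S)$'' is inaccessible: the induction hypothesis is a \emph{lower} bound on $|\C|$ and gives no upper bound on the number of colourings avoiding a fixed colour on a prescribed set. This is why the paper explicitly remarks that its use of Rosenfeld's idea is nonstandard and introduces no bad events to sum over.

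The idea you are missing is to condition on the colouring \emph{outside} $B_i$. The paper samples $\bc$, lets $\bc_0$ be its restriction to $G_{i-1}\setminus B_i$, and marks those $u\in B_i$ whose residual list $L_{\bc_0}(u)$ falls below a threshold $t_u=(d_u+1)(\ln\rho+1)$; your marginal invariant shows marking is rare, costing at most $t\,\delta_i/\ell$ colours in expectation. On the unmarked set $X_0$, conditionally on everything else, $\bc|_{X_0}$ is a uniform proper colouring of $G[X_0]$ from lists each of size exceeding $d_u+1$, and a one-line greedy resampling argument (Lemma~\ref{lem:maxdeg}) gives the product-form bound $\pr{x\notin\bc(X_0)\mid\bc_0}\ge\prod_{u\in\N_\bc(x)}\bigl(1-1/(\ell_{\bc_0}(u)-d_u)\bigr)$. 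Summing over $x\in L(v_i)$ and applying Jensen to the convex map $z\mapsto z\,e^{-a/z}$ then lands exactly on $\ell$ at the Lambert-$W$ threshold. It is this uncolour--recolour step, producing a \emph{multiplicative} bound over the vertices of $B_i$, that lets convexity close the recursion; additive independent-set bookkeeping cannot get there short of demanding $\ell\gtrsim\delta_i$.
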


In the statement of Theorem~\ref{thm:main}, we use the $W$-Lambert function $z\mapsto W(z)$ which is defined as the reciprocal of the function $z \mapsto ze^z$. In the proof of Theorem~\ref{thm:main}, we will use the well-known fact that $e^{W(z)} = z/W(z)$. Moreover, we note that $W(z) = \ln z - \ln \ln z + o(1)$ as $z\to \infty$; hence, by fixing $d\coloneqq 0$, Theorem~\ref{thm:main} implies the result of Molloy for triangle-free graphs $G$ of maximum degree $\Delta$, namely $\chi_\ell(G) \le (1+o(1))\Delta/\ln \Delta$. 
In the setting of \cite{Vu02}, Theorem~\ref{thm:main} has the following result as a corollary.

\begin{corollary}
\label{cor:vu}
Let $G$ be a graph of maximum degree $\Delta$, such that every neighbourhood spans at most $\Delta^2/f$ edges, for some $1 \le f \le \Delta^2+1$. Then
$ \chi_\ell(G) \le (1+o(1)) \frac{\Delta}{\ln \min \{\Delta,f\}}$ as $f\to \infty$.
\end{corollary}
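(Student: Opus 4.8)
The plan is to deduce Corollary~\ref{cor:vu} from Theorem~\ref{thm:main} after a degeneracy-type preprocessing step. The direct translation of the hypothesis says that $N(v)$, spanning at most $\Delta^2/f$ edges on $\deg(v)$ vertices, has average degree at most $2\Delta^2/(f\deg(v))$; this is only sharp when $\deg(v)$ is large, and a vertex whose neighbourhood is a clique on about $\Delta\sqrt{2/f}$ vertices forces a local average degree of order $\Delta/\sqrt f$, which fed into Theorem~\ref{thm:main} recovers only the bound $(2+o(1))\Delta/\ln f$ of \cite{DKPS20+}. The point is that such vertices have small degree and can be peeled off first.

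Write $D \coloneqq \min\{\Delta,f\}$; since $f \le \Delta^2+1$ forces $\Delta \ge \sqrt{f-1}$, we may assume $f$, hence also $\Delta$ and $D$, is large. Fix the target $C \coloneqq \ceil{(1+\eps_f)\Delta/\ln D}$ for some $\eps_f\to 0$ to be chosen at the end, and let $G'$ be obtained from $G$ by repeatedly deleting a vertex of current degree below $C$ until none remains. As in the standard degeneracy argument, any proper $L$-colouring of $G'$ extends to one of $G$ by colouring the deleted vertices in reverse order of deletion, each having fewer than $C \le |L(v)|$ already-coloured neighbours; so it suffices to $L$-colour $G'$. I would do this by applying Theorem~\ref{thm:main} to $G'$ with $\Delta' \coloneqq \Delta(G') \le \Delta$ and $d' \coloneqq \ceil{2\Delta^2/(fC)}$: every vertex of $G'$ has degree at least $C$, so each of its neighbourhoods --- an induced subgraph of $G$ on at least $C$ vertices with at most $\Delta^2/f$ edges --- has average degree at most $d'$. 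Since $C = (1+o(1))\Delta/\ln D$ and $D \le f$, we get $d'/\Delta' \le d'/C = O(\ln^2 D/f) \to 0$, so $d' \le \Delta'/6-1$; and writing $\rho' \coloneqq \Delta'/(d'+1)$, a short computation gives $\rho' = \Omega(\min\{\Delta/\ln D,\, f/\ln^2 D\}) = \Omega(D/\ln^2 D)$, hence $\ln\rho' = (1-o(1))\ln D$.

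It remains to check the list-size hypothesis with $\ell' \coloneqq \ceil{(d'+1)(\ln\rho')^3}$. Since $x \mapsto x/W(x/\ell')$ is increasing, the requirement is worst at $\deg(v) = \Delta'$, where $\Delta'/\ell' = \rho'/(\ln\rho')^3$; so $W(\Delta'/\ell') = (1-o(1))\ln\rho'$ by $W(z) = \ln z - \ln\ln z + o(1)$, and the required list size at $v$ is $\pth{1+\tfrac{2}{\ln\rho'}}\Delta'/W(\Delta'/\ell') = (1+o(1))\Delta'/\ln\rho' \le (1+o(1))\Delta/\ln D$. Choosing $\eps_f$ to dominate this $o(1)$ --- legitimate because every error term is controlled by the lower bound $\rho' = \Omega(\sqrt f/\ln^2 f)\to\infty$, a function of $f$ alone --- we get $|L(v)| \ge C \ge$ the required size for all $v \in V(G')$, so Theorem~\ref{thm:main} produces a proper $L$-colouring of $G'$, and therefore $\chi_\ell(G) \le C = (1+o(1))\Delta/\ln\min\{\Delta,f\}$. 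The only non-routine step is the preprocessing: peeling to minimum degree of order $\Delta/\ln D$ forces every surviving neighbourhood to have average degree only $O(\Delta\ln D/f)$, which is precisely what promotes the $\ln\sqrt f$ of the direct argument to the $\ln f$ of the statement; the rest is Lambert-$W$ bookkeeping together with keeping the $o(1)$ error terms uniform in $\Delta$.
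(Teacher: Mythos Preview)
Your argument is correct, and it takes a genuinely different route from the paper's. Both proofs face the same obstacle you identify: a low-degree vertex whose neighbourhood is a clique can have local average degree of order $\Delta/\sqrt{f}$, which is too large. The paper resolves this by \emph{regularising upward}: it embeds $G$ as an induced subgraph of a $\Delta$-regular graph $H$ in which every neighbourhood has the same edge count as in $G$ (via a known construction from \cite{PiSe21}), so that every neighbourhood now has exactly $\Delta$ vertices and hence average degree at most $2\Delta/f$; one then applies Theorem~\ref{thm:main} directly to $H$ with $\rho \ge \min\{f,\Delta\}/3$. You instead \emph{peel downward}: you strip off vertices of degree below $C\approx \Delta/\ln D$ so that every surviving neighbourhood has at least $C$ vertices and hence average degree $O(\Delta\ln D/f)$, and you colour the peeled vertices greedily afterwards.

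The paper's reduction is shorter and gives a slightly cleaner $\rho$, but it relies on an external blow-up construction. Your approach is entirely self-contained and uses only the standard degeneracy trick, at the price of some extra bookkeeping (tracking $\Delta'$ versus $\Delta$, absorbing extra $\ln D$ factors into $\rho'$, and verifying that the $o(1)$ errors are uniform in $\Delta$ via the bound $\rho' = \Omega(\sqrt{f}/\ln^2 f)$). Both land on the same asymptotic bound.
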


\begin{proof}
Let $G$ satisfy the hypothesis of Corollary~\ref{cor:vu}.
It is well known that there exists a $\Delta$-regular graph $H$ and a mapping $\varphi\colon V(H) \to V(G)$ such that $G$ is an induced subgraph of $H$, and for every vertex $v\in V(H)$ the number of edges in $H[N(v)]$ equals that in $G[N(\varphi(v))]$ (see for instance the construction in \cite[Lemma 6]{PiSe21}). So we may assume that $G$ is regular.
The average degree in $G[N(v)]$ is therefore at most $d \coloneqq 2\Delta/f$, for every $v\in V(G)$. We have 
\vspace{-4pt}
 \[ \frac{\Delta}{d+1} = \frac{\Delta f}{f + 2\Delta} \ge \begin{cases}  \frac{f}{3} & \mbox{if $f \le \Delta$,} \\ \frac{\Delta}{3} & \mbox{if $f \ge \Delta$.} \end{cases}
\vspace{-2pt} \]
 
 Setting $\rho\coloneqq \frac{\Delta}{d+1} \ge \min \left\{\frac{f}{3},\frac{\Delta}{3}\right\}$, we let $L\colon V(G)\to 2^\mathbb{N}$ be any list assignment of $G$ with $|L(v)| \ge \pth{1+\frac{2}{\ln \rho}} \frac{\Delta}{W\pth{\frac{\rho}{(\ln \rho)^3}}}$. By Theorem~\ref{thm:main}, $G$ is $L$-colourable. Hence
 \[ \chi_\ell(G) \le \pth{1+\frac{2}{\ln \rho}} \frac{\Delta}{W\pth{\frac{\rho}{(\ln \rho)^3}}} 
\le (1+o(1)) \frac{\Delta}{\ln \min \{\Delta,f\}},\]
 as $f \to \infty$ (and therefore also $\Delta\to \infty$).
\end{proof}

It has been observed in \cite{DJKP20} that there exist graphs satisfying the hypothesis of Corollary~\ref{cor:vu} and with chromatic number $(\frac{1}{2}-o(1))\Delta / \ln \min\{f,\Delta\}$. So the bound in Corollary~\ref{cor:vu} is sharp up to an asymptotic factor $2$. Reducing this gap would constitute a breakthrough in Ramsey theory, as this would imply an improvement of the estimate of the off-diagonal Ramsey numbers $R(3,t)$ for $t\in \mathbb{N}$, a long-standing open problem.

\medskip

In Section~\ref{sec:proof}, we prove Theorem~\ref{thm:main}. Our proof relies on the stronger induction hypothesis that given a uniform random colouring of any induced subgraph $H$ of $G$, the expected number of colours available for a vertex $v\in V(G) \setminus V(H)$ is large enough in expectation.
This is done using only the first moment method.
We were inspired by a counting argument which was first used in the context of graph colouring by Rosenfeld \cite{Ros20} (see \cite[Section 3.5]{Wo20} for an introduction to the method), and later devised more generally for hypergraph colouring by Wanless and Wood \cite{WaWo20+}. We note however that our use of that counting argument does not follow the usual guidelines, as we do not introduce any explicit bad events that our colourings should avoid. 


\medskip

We note that our result also holds in the context of DP-colouring, an extension of list colouring introduced by Dvo\v r\' ak and Postle \cite{DvPo18}. We have decided to state those results in the context of list colouring in order to avoid the verbose formalism of DP-colouring, but our proofs adapt readily to that context.


\section{Proof of the main result}
\label{sec:proof}
Let $G$ be a graph and $X\subset V(G)$.
We write $G[X]$ for the subgraph of $G$ induced by $X$. 
We need the following simple observation.

\begin{lemma}
\label{lem:maxdeg}
Let $H$ be a graph, and $L$ a list assignment of $H$ such that $|L(v)|\ge \deg(v)+1$ for every vertex $v\in V(H)$. 
If $\bc$ is a uniform random proper $L$-colouring of $H$, then given some colour $x\in \bigcup_{v\in V(H)} L(v)$, the probability that $\bc(v)\neq x$ for all $v\in V(H)$ is at least
\[ \prod_{v\in V(H)} \pth{1- \frac{1}{|L(v)|-\deg(v)}}.\]
\end{lemma}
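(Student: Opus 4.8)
The plan is to prove this by a ``reverse reveal'' telescoping argument over an arbitrary ordering of the vertices, using at each step the simple observation that conditioning a uniform proper $L$-colouring on the colours of all-but-one vertex leaves the remaining vertex uniform on its list minus the (at most $\deg(v)$) colours of its neighbours.

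First I would record that a proper $L$-colouring of $H$ exists at all — colour the vertices greedily in any order; since $|L(v)| \ge \deg(v)+1$ strictly exceeds the number of already-coloured neighbours of $v$, this never gets stuck — so the uniform random proper $L$-colouring $\bc$ is well defined. Next, if $|L(v)| = \deg(v)+1$ for some $v$ then the claimed product is $0$ and there is nothing to prove, so I may assume $|L(v)| - \deg(v) \ge 2$ for every $v$.

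Now fix an arbitrary enumeration $v_1,\dots,v_n$ of $V(H)$ and, for $0 \le i \le n$, let $B_i$ be the event that $\bc(v_j) \ne x$ for all $j > i$; thus $B_n$ is the whole probability space, $B_0$ is the event we want to bound below, and $B_{i-1} = B_i \cap \{\bc(v_i) \ne x\}$. The key step is the claim that
\[
\pr{\bc(v_i) = x \mid B_i} \le \frac{1}{|L(v_i)| - \deg(v_i)}.
\]
To see this, condition further on the values $\bc(w)$ for all $w \ne v_i$: on any such event of positive probability the conditional law of $\bc(v_i)$ is uniform on $L(v_i) \setminus \{\bc(w) : w \in N(v_i)\}$, a set of size at least $|L(v_i)| - \deg(v_i)$, so the conditional probability that $\bc(v_i)=x$ is at most $1/(|L(v_i)|-\deg(v_i))$ (or $0$); since the event $B_i$ depends only on the colours of $v_j$ with $j > i$, and in particular not on $\bc(v_i)$, the tower property lets us average this bound over the conditional distribution given $B_i$ to obtain the claim. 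Consequently $\pr{B_{i-1}} \ge \pr{B_i}\bigl(1 - \tfrac{1}{|L(v_i)|-\deg(v_i)}\bigr)$ (the assumption $|L(v_i)|-\deg(v_i)\ge 2$ keeps all these probabilities positive so the conditioning is legitimate), and telescoping from $i=n$ down to $i=1$ gives $\pr{B_0} \ge \prod_{v \in V(H)}\bigl(1 - \tfrac{1}{|L(v)|-\deg(v)}\bigr)$, which is exactly the statement since $B_0 = \{\bc(v) \ne x \text{ for all } v\}$. The only delicate point is the measurability bookkeeping in the key step — making sure that ``$B_i$ does not see $\bc(v_i)$'' is used correctly so that the one-vertex conditional bound can be integrated against the conditional distribution given $B_i$; everything else is routine.
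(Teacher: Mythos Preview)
Your argument is correct and is essentially the same as the paper's: both hinge on the single observation that, conditioned on the colours of all vertices other than $v$, the colour $\bc(v)$ is uniform on a set of size at least $|L(v)|-\deg(v)$, and both turn this into a product bound by a sequential/telescoping argument over an ordering of $V(H)$. The paper phrases the telescoping as ``resample each vertex once'' (a Gibbs step that preserves the uniform law), whereas you phrase it as a chain of conditionings on the nested events $B_i$; these are two presentations of the same computation, and your version is in fact more explicit about the edge case $|L(v)|=\deg(v)+1$ and the measurability needed to pass from the pointwise conditional bound to the bound given $B_i$.
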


\begin{proof}
Since $H$ is greedily $L$-colourable, we can sample a uniform proper $L$-colouring $\bc$ of $H$. 
Let $v\in V(H)$.
For every $L$-colouring $c'$ of $H'\coloneqq H\setminus v$, one has
\[ \pr{\bc(v)=x \mid \restrict{\bc}{H'}=c'} \le \frac{1}{|L(v)|-\deg(v)},\]
since after removing the colours in $c'(N(v))$ from $L(v)$, there remains at least $|L(v)|-\deg(v)$ possible choices for $\bc(v)$ which are equiprobable. 
Sampling a uniform random $L$-colouring and then resampling the colour of each vertex once gives the result, as the resulting random $L$-colouring is also uniformly distributed. 
\end{proof}

Before proceeding with the proof of Theorem~\ref{thm:main}, we describe the random experiment at the heart of the proof. 
Let $G$ be a graph, $L$ a list assignment of $G$, and $v\in V(G)$. We wish to prove that for $\bc$ drawn uniformly at random from the set of proper $L$-colourings of $G'\coloneqq G\setminus v$, the sublist of $L(v)$ consisting of available colours at $v$ given $\bc$ is large in expectation. 
We do so by analysing the following random procedure.
\begin{enumerate}[(i)]
    \item \label{one} Sample a proper colouring of $G'$ uniformly at random;
    \item \label{two} mark all vertices in $N(v)$ that have short lists \emph{due to the colouring on} $G'\setminus N(v)$;
    \item \label{three} uncolour all unmarked vertices in $N(v)$;
    \item \label{four} choose a proper re-colouring of the uncoloured vertices in $N(v)$ uniformly at random. 
\end{enumerate}
Conveniently, the random proper colouring obtained at the end of this experiment is once again uniformly distributed across all proper colourings of $G'$. 
This is because the lists we mark in step \eqref{two} are selected solely based on the colouring of $G'\setminus N(v)$, which remains fixed after step \eqref{one}.
The proof proceeds by computing a lower bound on the expected size of the list of available colours at $v$ after step \eqref{four}, thus proving the desired induction hypothesis.


\begin{proof}[Proof of Theorem~\ref{thm:main}]
Fix $\rho\coloneqq \frac{\Delta}{d+1} \ge 6$, $t\coloneqq (d+1)(\ln \rho+1)$, and $\ell \ge (d+1)(\ln \rho)^3$.
 For every $v\in V(G)$, let $k(v) \coloneqq \pth{1+\frac{2}{\ln \rho}}\frac{\deg(v)}{W\pth{\frac{\deg(v)}{\ell}}}$. Note that $k(v)\ge \frac{\deg(v)}{W\pth{\frac{\deg(v)}{\ell}}} = \ell e^{W(\deg(v)/\ell)} \ge \ell$.

\smallskip

Let $L$ be any list assignment of $G$ such that $|L(v)| \ge k(v)$ for every vertex $v\in V(G)$.
Given a proper $L$-colouring $c$ of a subgraph $H$ of $G$ and a vertex $v\in V(G)$, we let $L_c(v) \coloneqq L(v) \setminus c(N_H(v))$ be the set of colours still available at $v$ given $c$, and $\ell_c(v) \coloneqq |L_c(v)|$ the number of such colours.
Let $\C(H)$ denote the set of proper $L$-colourings of $H$.
We show by induction that for all induced subgraphs $H\subseteq G$ we have 
\begin{equation}
\label{eq:HI}
\tag{$\star$}
    |\C(H)|\ge \ell \, |\C(H\setminus v)|,
\end{equation}
for all $v\in V(H)$. Observe that, given a colouring $c\in \C(H\setminus v)$, the number of extensions of $c$ to a colouring in $\C(H)$ is precisely $\ell_c(v)$. Hence \eqref{eq:HI} is equivalent to $\esp{\ell_{\bc}(v)}\geq \ell$, for a uniform random colouring $\bc \in \C(H\setminus v)$.

The base case with $H=v$ for some $v\in V(G)$ follows as we have $|\C(\varnothing)|=1$, and $|\C(H)|=k(v)\ge \ell$.
Suppose now that $|V(H)|\ge 2$, and that the induction hypothesis \eqref{eq:HI} holds for all induced subgraphs of $H'\coloneqq H\setminus v$.
We let $\bc$ be drawn uniformly at random from $\C(H')$, and we let $\bc_0$ be obtained from $\bc$ by uncolouring $N_H(v)$; thus $\bc_0$ is a proper $L$-colouring of $H_0\coloneqq H'\setminus N(v)$.
For every $u\in N_H(v)$, we denote by $d_u$ the degree of $u$ within $H[N_H(v)]$, and we let $t_u \coloneqq (d_u+1)(\ln \rho + 1)$. Hence the average of $t_u$ over all $u\in N_H(v)$ is at most $t$.
By the induction hypothesis \eqref{eq:HI}, and using again the observation that the number of extensions of a colouring $c$ to an additional vertex $u$ is $\ell_c(u)$, we know that 
\begin{equation}\label{eq:markov}
      \pr{\ell_\bc(u)\leq t_u} = \frac{|\{c'\in \C(H') : \ell_{c'}(u)\le t_u\}|}{|\C(H')|}\le \frac{t_u \cdot |\C(H'\setminus u)|}{\ell \cdot |\C(H'\setminus u)|} \le \frac{t_u}{\ell}.
  \end{equation} 

Given a colouring $c\in \C(H')$ such that $\restrict{c}{H_0}=c_0$, let $L^0_c(v)$ be obtained by removing from $L(v)$ the colour $c(u)$ for each neighbour $u$ of $v$ such that $\ell_{c_0}(u)\le t_u$.
We let $\bk(v) \coloneqq |L^0_\bc(v)|$. Since there is at most one colour removed for each such neighbour $u$, one has 
\begin{align}
\esp{\bk(v)} &\ge k(v) - \sum_{u\in N(v)}\limits  \pr{\ell_{\bc_0}(u)\le t_u} \ge k(v)-\frac{t \deg(v)}{\ell} \\
& \ge \pth{1+\frac{2}{\ln \rho}}\frac{\deg(v)}{W\pth{\frac{\deg(v)}{\ell}}} - \frac{1}{\ln \rho}\frac{\deg(v)}{\ln \rho - 1 }
\ge \pth{1+\frac{1}{\ln \rho}}\frac{\deg(v)}{W\pth{\frac{\deg(v)}{\ell}}},
\label{eq:esperance(k)}
\end{align}
where we use that $\ln \rho -1 \ge W(\rho/(\ln \rho)^3) \ge W(\deg(v)/\ell)$, since $\rho \ge 6$.
For a colour $x\in L(v)$, we let $\N_c(x)$ be the set of neighbours $u\in N(v)$ such that $x\in L_c(u)$ and $\ell_{c_0}(u) \ge t_u$.
In the final stages of the proof we will need the following inequality, which 
uses that $\frac{\ell_{c_0}(u)}{\ell_{c_0}(u)-d_u-1} \le \frac{t_u}{t_u-d_u-1} = 1+\frac{1}{\ln \rho}$ if $u\in \N_c(x)$ (and so $\ell_{c_0}(u)\ge t_u$). 
That is, we have
\begin{align}
 \sum_{x\in L(v)} \sum_{u \in \N_{c}(x)} \frac{1}{\ell_{c_0}(u)-d_u-1} &\le \sum_{x\in L(v)} \sum_{u \in \N_{c}(x)} \pth{1+\frac{1}{\ln \rho}}\frac{1}{\ell_{c_0}(u)} \\
 &\le \pth{1+\frac{1}{\ln \rho}} \sum_{\substack{u\in N(v) \\ \ell_{c_0}(u) \ge t_u}} \sum_{y\in L_c(u)} \frac{1}{\ell_{c_0}(u)} 
 \le \pth{1+\frac{1}{\ln \rho}}\deg(v). 
 \label{eq:double-sum}
 \end{align}

We will now estimate the expected value of $\ell_{\bc}(v)$.
Let $c_0 \in \C(H_0)$ be a possible realisation of $\restrict{\bc}{H_0}$.
Let $X_0 \coloneqq \{u\in N(v) : \ell_{c_0}(u)\ge t_u\}$ and let $H_1 \coloneqq H'\setminus X_0$, that is $H_1$ includes all vertices with short lists given $c_0$. 
First observe that we have $\ell_{c_1}(u)-\deg_{H[X_0]}(u) \ge \ell_{c_0}(u)-d_u \ge t_u-d_u > 1$ for every vertex $u\in X_0$ and every extension $c_1$ of $c_0$ to $H_1$. 
Indeed,  each vertex in $N(u)\cap N(v)\cap H_1$ can remove at most one colour from $L_{c_0}(v)$.
We now let $c_1$ be a possible realisation of $\restrict{\bc}{H_1}$ given $\bc_0=c_0$. If we assume that $\restrict{\bc}{H_1} = c_1$, then by definition we have $L^0_\bc(v)=L_{c_1}(v)$, and $\N_\bc(x) = \N_{c_1}(x)$ for every colour $x\in L_{c_1}(v)$. By Lemma~\ref{lem:maxdeg} applied on the graph $H[X_0]$ with the list assignment $L_{c_1}$ we have, for every $x\in L_{c_1}(v)$, 
\begin{align}
 &\pr{x\in L_\bc(v) \; \middle|\; \restrict{\bc}{H_1}=c_1} = \pr{\bc(v) \neq x \mbox{, for every } u\in X_0  \; \middle|\; \restrict{\bc}{H_1}=c_1} \\
 &\ge \prod_{u \in \N_{c_1}(x)} \pth{1-\frac{1}{\ell_{c_1}(u) - \deg_{H[X_0]}(u)}} \ge \prod_{u \in \N_{c_1}(x)} \pth{1-\frac{1}{\ell_{c_0}(u) - d_u}} \\
  &= \esp{\prod_{u \in \N_\bc(x)} \pth{1-\frac{1}{\ell_{\bc_0}(u) - d_u}} \; \middle|\; \restrict{\bc}{H_1}=c_1}.\\
\end{align}
We deduce from the law of total expectation that 
\[\esp{\ell_{\bc}(v)} \ge \esp{\sum_{x\in L^0_\bc(v)} \prod_{u\in \N_{\bc}(x)} \pth{1-\frac{1}{\ell_{\bc_0}(u)-d_u}}}.\]

We continue with a sequence of further lower bounds on the right hand side,
where we first use the fact that $1-\frac{1}{x} \ge e^{-1/(x-1)}$ for every $x>1$.
We also use Jensen's inequality together with the convexity of the function $x\mapsto xe^{-a/x}$ for any constant $a\ge 0$, on the domain $(0,+\infty)$.
\begin{align*}
\esp{\ell_{\bc}(v)} 
& \ge \esp{\sum_{x\in L^0_\bc(v)} \exp\pth{\sum_{u\in \N_{\bc}(x)} -\frac{1}{\ell_{\bc_0}(u)-d_u-1}}} \\
&\ge \esp{\bk(v)e^{-\pth{1+1/\ln \rho}\deg(v)/\bk(v)}} & \mbox{by convexity of $\exp$ and \eqref{eq:double-sum},}\\
&\ge \esp{\bk(v)}e^{-\pth{1+1/\ln \rho}\deg(v)/\esp{\bk(v)}} & \mbox{by Jensen's inequality,} \\
&\ge \pth{1+\frac{1}{\ln \rho}}\frac{\deg(v)}{W\pth{\frac{\deg(v)}{\ell}}} e^{-W\pth{\frac{\deg(v)}{\ell}}} \ge \ell &\mbox{by \eqref{eq:esperance(k)}.}
\end{align*}

This ends the proof of the induction.
\end{proof}

In the last equation of the above proof, it is interesting to note that the transition from the first to the second line informs us that the worst case for our lower bound occurs when all colours $x\in L^0_\bc(v)$ 
are equally likely to appear in $L_\bc(v)$.
Moreover, the transition from the second to the third line shows that the worst case is when the appearance of short lists in $N(v)$ is evenly distributed across all colourings $c_0\in \C(H_0)$.

%
%

\section{A precise count of the number of colourings}
Shortly before this manuscript was published in a preprint repository, a similar independent work by Bernshteyn, Brazelton, Cao, and Kang appeared online \cite{BBRK21+}. They also rely on the counting argument used by Rosenfeld in \cite{Ros20}, and prove the following.

\begin{thm}[Bernshteyn, Brazelton, Cao, Kang; 2021]
\label{thm:Anton}
For every $\eps>0$, there exists $\Delta_0$ such that the following holds. Let $G$ be an $n$-vertex triangle-free graph of maximum degree $\Delta\ge \Delta_0$ and with $m$ edges. Then, for every $q\ge (1+\eps) \Delta/\ln \Delta$, the number of proper $q$-colourings of $G$ is at least
$ (1-1/q)^m \big((1-\delta)q \big)^n$,
where $\delta=\frac{4}{q} e^{\Delta/q}$.
\end{thm}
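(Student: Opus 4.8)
The plan is to mimic the inductive/first-moment strategy of the proof of Theorem~\ref{thm:main}, but now tracking a *product* over vertices rather than a single expected list size, so that the bound we prove by induction is directly a lower bound on $|\C(H)|$ for every induced subgraph $H$. Concretely, for an induced subgraph $H$ with $m_H$ edges and $n_H$ vertices I would try to prove by induction on $|V(H)|$ that
\[
  |\C(H)| \ge (1-1/q)^{m_H}\big((1-\delta)q\big)^{n_H},
\]
where $\C(H)$ is the set of proper $q$-colourings of $H$ (every vertex has the full list $[q]$), and $\delta = \tfrac{4}{q}e^{\Delta/q}$. Peeling off a vertex $v$ of $H$, with $H' = H\setminus v$ and $\deg_H(v) =: D \le \Delta$, the number of extensions of a colouring $c\in\C(H')$ to $H$ is exactly $\ell_c(v) = q - |c(N_H(v))|$, so the induction step reduces to showing
\[
  \esp{\ell_{\bc}(v)} \ge (1-1/q)^{D}\,(1-\delta)q
\]
for $\bc$ uniform in $\C(H')$; multiplying by $|\C(H')|$ and using the induction hypothesis on $H'$ (which has $m_H - D$ edges and $n_H - 1$ vertices) then gives the claim. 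The base case $H = \{v\}$ is immediate since $|\C(\{v\})| = q \ge (1-\delta)q$.

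For the induction step I would run exactly the four-step random experiment from the excerpt (sample $\bc\in\C(H')$; mark vertices $u\in N_H(v)$ whose list $L_{\bc_0}(u)$ is already short given the colouring $\bc_0$ on $H_0 := H'\setminus N_H(v)$; uncolour the unmarked neighbours; recolour them uniformly), since this re-randomisation preserves the uniform distribution on $\C(H')$. The key point, as in Theorem~\ref{thm:main}, is that because $G$ is triangle-free, $H[N_H(v)]$ is an empty graph, so every neighbour $u$ has $d_u = 0$ in that induced subgraph; this removes the need for the $t_u$ machinery almost entirely and should produce a much cleaner computation. By Lemma~\ref{lem:maxdeg} applied to the empty graph $H[N_H(v)]$ with list assignment $L_{\bc_0}$ (whose lists have size $\ell_{\bc_0}(u) \ge q - \deg_{H_0}(u) \ge q - \Delta + \deg_{H[N(v)]}(v)$, in particular $\ge q - \Delta \ge 1$ once $q \ge (1+\eps)\Delta/\ln\Delta$ and $\Delta$ is large), one gets for each colour $x$
\[
  \pr{x \in L_{\bc}(v) \;\middle|\; \text{colouring on } H\setminus v \text{ off } N(v)} \ge \prod_{u\in\N(x)}\Big(1 - \tfrac{1}{\ell_{\bc_0}(u)}\Big) \ge \Big(1 - \tfrac{1}{q-\Delta}\Big)^{|\N(x)|},
\]
and summing over $x\in[q]$, using $\sum_x |\N(x)| \le \sum_{u\in N(v)} \ell_{\bc_0}(u) \le qD$ together with convexity of $t\mapsto q\big(1-\tfrac1{q-\Delta}\big)^{t}$ (Jensen), should yield $\esp{\ell_{\bc}(v)} \ge q\big(1-\tfrac1{q-\Delta}\big)^{D}$, and then one converts $\big(1-\tfrac1{q-\Delta}\big)^D$ into $(1-1/q)^D(1-\delta)$ by an elementary estimate: $\big(\tfrac{1-1/(q-\Delta)}{1-1/q}\big)^{D} \ge 1 - \delta$ because each factor is $1 - \tfrac{\Delta}{(q-\Delta)(q-1)} \ge 1 - O(\Delta/q^2)$ and $D\le\Delta$, so the product is $\ge 1 - O(\Delta^2/q^2)$, which is $\ge 1 - \delta = 1 - \tfrac4q e^{\Delta/q}$ for $q\gtrsim\Delta/\ln\Delta$ and $\Delta$ large.

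The main obstacle I anticipate is pinning down the exact constant $\delta = \tfrac4q e^{\Delta/q}$: one has to be careful that the first $1/q$-loss per edge is extracted *cleanly* (it should come out of the $(1-1/q)$ appearing once per neighbour of $v$, i.e. once per edge incident to $v$), while all the lower-order slack — the difference between $q-\Delta$ and $q$ in the denominators, the marked (short-list) neighbours that were uncoloured, and the Jensen step — gets absorbed into the single factor $(1-\delta)$ per vertex. Handling the marked neighbours needs one extra line: an uncoloured-because-marked neighbour $u$ contributes, by a Markov/averaging bound analogous to \eqref{eq:markov}, probability at most $O(\Delta/q)$ of having had a short list, and one checks this loss too is swallowed by $\delta$ once $e^{\Delta/q} = \Theta(1)$ or larger in the relevant regime. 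Everything else — triangle-freeness killing the within-neighbourhood degrees, the re-randomisation argument, the convexity step — transfers verbatim from the proof of Theorem~\ref{thm:main}, so modulo this bookkeeping the argument should go through.
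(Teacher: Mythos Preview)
The paper does not actually prove Theorem~\ref{thm:Anton}; it is merely quoted as a result of Bernshteyn, Brazelton, Cao, and Kang from \cite{BBRK21+}. So there is no ``paper's own proof'' to compare your attempt against. That said, your proposal contains a genuine error that makes the argument collapse in the regime the theorem is about.

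You write that the lists satisfy $\ell_{\bc_0}(u) \ge q - \deg_{H_0}(u) \ge q - \Delta \ge 1$ ``once $q \ge (1+\eps)\Delta/\ln\Delta$ and $\Delta$ is large'', and you then carry the factor $\big(1-\tfrac{1}{q-\Delta}\big)$ through the entire computation. But in the regime of the theorem one has $q = \Theta(\Delta/\ln\Delta) \ll \Delta$, so $q-\Delta$ is large and \emph{negative}; the trivial lower bound on $\ell_{\bc_0}(u)$ is vacuous, the factor $1-\tfrac{1}{q-\Delta}$ is meaningless, and your subsequent estimate $\esp{\ell_{\bc}(v)} \ge q\big(1-\tfrac{1}{q-\Delta}\big)^{D}$ and the comparison $\big(\tfrac{1-1/(q-\Delta)}{1-1/q}\big)^{D} \ge 1-\delta$ both fail outright. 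This is not a bookkeeping issue: it is precisely the obstacle that forces the paper (in its proof of Theorem~\ref{thm:main}) to introduce the thresholds $t_u$ and to use the induction hypothesis, via the Markov-type bound \eqref{eq:markov}, to guarantee that \emph{most} neighbours have $\ell_{\bc_0}(u) \ge t_u$ --- something no trivial deterministic bound can provide when $q \ll \Delta$.

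There is a second, related difficulty: your proposed induction hypothesis $|\C(H)| \ge (1-1/q)^{m_H}\big((1-\delta)q\big)^{n_H}$ is a lower bound on $|\C(H)|$ only, and does not by itself yield a usable upper bound on $|\C(H'\setminus u)|/|\C(H')|$, which is what one needs to run the Markov step \eqref{eq:markov}. In the paper's argument this ratio is controlled because the induction is of the multiplicative form $|\C(H)| \ge \ell\,|\C(H\setminus v)|$, which immediately gives $|\C(H'\setminus u)| \le |\C(H')|/\ell$. Your hypothesis would have to be strengthened (or run alongside a second, ratio-type hypothesis) before the ``one extra line'' you allude to for the marked neighbours could actually be written down.
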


{They in fact go further and show that the bound on the number of colourings given by Theorem~\ref{thm:Anton} is asymptotically sharp for $\Delta$-regular triangle-free graphs.}
In particular, they prove that a random $\Delta$-regular triangle-free graph will almost surely admit no more than $(1-\frac{1}{q})^{\Delta n/2} \pth{\pth{1+\frac{2\ln n}{n}}q }^n$ $q$-colourings as $n \to \infty$.

When $q=(1+o(1))\Delta/\ln \Delta$, the number of $q$-colourings given by Theorem~\ref{thm:Anton} for every triangle-free graph $G$ is $e^{\pth{\frac{1}{2}-o(1)}n\ln \Delta}$.
We note that Theorem~\ref{thm:main} applied with $\ell=\sqrt{\Delta}$ implies the same bound on the number of $2q$-colourings, so with twice as many colours. 
We now argue that we can slightly adapt the proof of Theorem~\ref{thm:main} to give a bound that matches the number of colourings given by Theorem~\ref{thm:Anton} and that applies in the more general setting of Theorem~\ref{thm:main}.

\begin{thm}\label{thm:main-tight}
Let $G$ be an $n$-vertex graph of maximum degree $\Delta$ such that every graph induced by a neighbourhood in $G$ has average degree at most $d \le \frac{\Delta}{6}-1$. Let $f \coloneqq \Delta/(d+1)$ and
suppose $L\colon V(G)\to 2^\mathbb{N}$ is a list assignment  with $|L(v)| \ge \pth{1+\frac{1}{\ln \rho}} q(v)$, where  
\[q(v) \ge \pth{1+\frac{1}{\ln \rho}} \frac{\deg(v)}{W\pth{\frac{\deg(v)}{(d+1)(\ln \rho)^3}}}\]
 for every vertex $v\in V(G)$.
Then there are at least $(q\big/\sqrt{D})^n$
 proper $L$-colourings of $G$, where $D$ is the geometric mean of the degrees in $G$, and $q$ is the geometric mean of $\{q(v)\}_{v\in V(G)}$.
\end{thm}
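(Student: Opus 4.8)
The plan is to re‑run the induction in the proof of Theorem~\ref{thm:main} almost verbatim, but to track a \emph{degree‑sensitive} lower bound on the expected number of colours available at the deleted vertex, and then to telescope these bounds along a well‑chosen ordering of $V(G)$. Write $\rho\coloneqq\Delta/(d+1)$ and $\ell'\coloneqq(d+1)(\ln\rho)^3$, and for $v\in V(G)$ and an integer $b$ with $0\le b\le\deg(v)$ set
\[ g_v(b)\coloneqq q(v)\exp\pth{-\pth{1+\tfrac{1}{\ln\rho}}\frac{b}{q(v)}}. \]
Since $q(v)\ge\pth{1+\tfrac{1}{\ln\rho}}\deg(v)/W(\deg(v)/\ell')$, using $e^{-W(z)}=W(z)/z$ and the fact that $x\mapsto xe^{-a/x}$ is increasing on $(0,\infty)$, one checks that $g_v(b)\ge\pth{1+\tfrac{1}{\ln\rho}}\ell'\ge\ell'$ for every $0\le b\le\deg(v)$; in other words $g_v$ never drops below the constant ``$\ell$'' that plays the central role in Theorem~\ref{thm:main}.

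I would first establish, by induction over induced subgraphs of $G$ in exactly the manner of Theorem~\ref{thm:main}, the stronger hypothesis
\[ |\C(H)|\ge g_v(\deg_H(v))\cdot|\C(H\setminus v)|\qquad\text{for every induced }H\subseteq G\text{ and every }v\in V(H), \]
equivalently $\esp{\ell_{\bc}(v)}\ge g_v(\deg_H(v))$ for $\bc$ uniform in $\C(H\setminus v)$. The base case $H=\{v\}$ holds because $|L(v)|\ge\pth{1+\tfrac{1}{\ln\rho}}q(v)\ge q(v)=g_v(0)$. For the inductive step I would copy the argument of Theorem~\ref{thm:main} line by line, the only change being that the parameter ``$\deg(v)$'' is everywhere the \emph{local} degree $b\coloneqq\deg_H(v)=|N_H(v)|$; in particular the double‑sum estimate~\eqref{eq:double-sum} becomes $\le\pth{1+\tfrac{1}{\ln\rho}}b$. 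The loss estimate survives untouched: applying the (now stronger) induction hypothesis to each $u\in N_H(v)$ inside $H'\coloneqq H\setminus v$ and using $g_u(\deg_{H'}(u))\ge\ell'$ gives $\pr{\ell_{\bc_0}(u)\le t_u}\le t_u/\ell'$ exactly as before, so $\sum_{u\in N_H(v)}\pr{\ell_{\bc_0}(u)\le t_u}\le(\ln\rho+1)b/(\ln\rho)^3\le\tfrac{1}{\ln\rho}q(v)$, whence $\esp{\bk(v)}\ge\pth{1+\tfrac{1}{\ln\rho}}q(v)-\tfrac{1}{\ln\rho}q(v)=q(v)$. The final chain of inequalities of Theorem~\ref{thm:main} (convexity of $\exp$, then Jensen for $x\mapsto xe^{-a/x}$ with $a\coloneqq\pth{1+\tfrac{1}{\ln\rho}}b$) then yields
\[ \esp{\ell_{\bc}(v)}\ \ge\ \esp{\bk(v)}\exp\pth{-\pth{1+\tfrac{1}{\ln\rho}}\frac{b}{\esp{\bk(v)}}}\ \ge\ q(v)\exp\pth{-\pth{1+\tfrac{1}{\ln\rho}}\frac{b}{q(v)}}\ =\ g_v(b), \]
the middle step being monotonicity of $x\mapsto xe^{-a/x}$ applied at $\esp{\bk(v)}\ge q(v)$.

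To conclude, enumerate $V(G)=\{v_1,\dots,v_n\}$ in nondecreasing order of degree, set $H_0\coloneqq\varnothing$, $H_i\coloneqq G[\{v_1,\dots,v_i\}]$, and let $b_i\coloneqq\deg_{H_i}(v_i)$ be the back‑degree of $v_i$, so that $\sum_i b_i=|E(G)|$. Telescoping and the strengthened hypothesis give
\[ |\C(G)|=\prod_{i=1}^{n}\frac{|\C(H_i)|}{|\C(H_{i-1})|}\ \ge\ \prod_{i=1}^{n}g_{v_i}(b_i)\ =\ \pth{\prod_{i=1}^{n}q(v_i)}\exp\pth{-\pth{1+\tfrac{1}{\ln\rho}}\sum_{i=1}^{n}\frac{b_i}{q(v_i)}}. \]
Now $\pth{1+\tfrac{1}{\ln\rho}}/q(v_i)\le W(\deg(v_i)/\ell')/\deg(v_i)$, and since each edge $\{v_a,v_b\}$ with $a<b$ contributes exactly one unit to $b_b$, one has $\sum_i b_i\,W(\deg(v_i)/\ell')/\deg(v_i)=\sum_{\{v_a,v_b\}\in E,\,a<b}W(\deg(v_b)/\ell')/\deg(v_b)$. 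The map $x\mapsto W(x/\ell')/x$ is nonincreasing (it equals $\tfrac{1}{\ell'}e^{-W(x/\ell')}$), and $\deg(v_b)\ge\deg(v_a)$, so each summand is at most the average $\tfrac12\big(W(\deg(v_a)/\ell')/\deg(v_a)+W(\deg(v_b)/\ell')/\deg(v_b)\big)$; summing over edges and then using $W(z)\le\ln z$ bounds the whole sum by $\tfrac12\sum_{v}W(\deg(v)/\ell')\le\tfrac12\sum_{v}\ln\deg(v)$. Therefore $|\C(G)|\ge\big(\prod_{i}q(v_i)\big)\prod_{v}\deg(v)^{-1/2}=(q/\sqrt D)^n$, as claimed.

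The only genuinely new ingredient, and the step I expect to be the real obstacle, is guessing this induction hypothesis. It must be degree‑sensitive so that the product over the ordering exposes the term $-\tfrac12\sum_{v}\ln\deg(v)$ and hence the factor $\sqrt D$, yet it has to have the precise exponential shape $q(v)e^{-(1+1/\ln\rho)b/q(v)}$: this is exactly what makes it simultaneously (i)~never smaller than $\ell'$ over all local degrees, so that the delicate loss analysis of Theorem~\ref{thm:main} carries over with no change, and (ii)~multiplicative in a way that collapses cleanly to $(q/\sqrt D)^n$, the factor $\tfrac12$ arising from orienting every edge towards its higher‑degree endpoint via the monotonicity of $W(x/\ell')/x$. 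Finally, a few degenerate cases of very small degree need a word: isolated vertices may be deleted at the outset, and for a degree‑$1$ vertex (where $W(\deg(v)/\ell')\le\ln\deg(v)$ narrowly fails) one places it early in the ordering and uses the identity $|\C(H_i)|/|\C(H_{i-1})|=|L(v_i)|\ge\pth{1+\tfrac{1}{\ln\rho}}q(v_i)$ available when $b_i=0$; this is routine bookkeeping that I would not belabour.
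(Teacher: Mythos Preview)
Your approach is essentially the paper's: both establish the degree-sensitive bound $\esp{\ell_{\bc}(v)}\ge q(v)\exp\bigl(-(1+\tfrac{1}{\ln\rho})\deg_H(v)/q(v)\bigr)$ by rerunning the induction of Theorem~\ref{thm:main} with $\ell=(d+1)(\ln\rho)^3$ and noting that $\deg(v)$ may be replaced by $\deg_H(v)$ in \eqref{eq:double-sum}, and then telescope along a vertex ordering. The paper orders by nondecreasing $q(v)$ rather than degree, which lets it write the exponent directly as $-(1+\tfrac{1}{\ln\rho})\sum_{uv\in E}\min\{1/q(u),1/q(v)\}$ and bound $\min$ by the average, bypassing your monotonicity argument for $W(x/\ell')/x$; both routes land in the same place.

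One slip to fix: your intermediate bound $\sum_{u\in N_H(v)}\pr{\ell_{\bc_0}(u)\le t_u}\le(\ln\rho+1)\,b/(\ln\rho)^3$ is not correct as written, because the local-sparsity hypothesis controls the average degree of $G[N_G(v)]$, not of the induced subgraph $H[N_H(v)]$, which can be denser. One only has $\sum_{u\in N_H(v)}(d_u+1)=2|E(H[N_H(v)])|+b\le d\deg_G(v)+b\le(d+1)\deg_G(v)$, so the bound should read $(\ln\rho+1)\deg_G(v)/(\ln\rho)^3$ with the \emph{global} degree. This is still at most $\tfrac{1}{\ln\rho}q(v)$ (exactly the computation in \eqref{eq:esperance(k)}), so your conclusion $\esp{\bk(v)}\ge q(v)$ survives and the remainder of the argument goes through unchanged.
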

\begin{proof}
Following the same set-up as in the proof of Theorem~\ref{thm:main} we have 
$\esp{\bk(v)} \ge q(v)$, for every vertex $v\in V(H)$.
We also note that in \eqref{eq:double-sum}, we can replace $\deg(v)$ with $\deg_H(v)$. 
Then
\begin{align}
\label{eq:tight-expectancy}
\esp{\ell_\bc(v)} &\ge \esp{\bk(v)}e^{-\pth{1+\frac{1}{\ln \rho}}\deg_H(v)/\esp{\bk(v)}} \ge q(v)e^{-\pth{1+\frac{1}{\ln \rho}}\deg_H(v)/q(v)}.
\end{align}

We now let $v_1,\ldots,v_n$ be an ordering of $V(G)$ such that $\pth{q(v_i)}_{i=1}^n$ is non-decreasing. Letting $H_1$ be the empty graph, and $H_i \coloneqq G[v_1, \ldots, v_{i-1}]$ for every $2\le i \le n$, we apply \eqref{eq:tight-expectancy} on the pairs $(H_i,v_i)$ for every $1\le i \le n$ and obtain that the number of $L$-colourings of $G$ is
\begin{align*}
|\C(G)| &\ge \prod_{i=1}^n q(v_i) e^{-\pth{1+\frac{1}{\ln \rho}}\deg_{H_i}(v_i)/q(v_i)} \\
& = q^n \exp \pth{-\pth{1+\tfrac{1}{\ln \rho}} \sum_{uv \in E(G)}\limits \min \left\{ \frac{1}{q(u)}, \frac{1}{q(v)} \right\}} \\
&\ge q^n \exp \pth{-\pth{1+\tfrac{1}{\ln \rho}} \sum_{uv \in E(G)}\limits \pth{ \frac{1}{2q(u)} + \frac{1}{2q(v)} }} \\
&= q^n  \exp\pth{-\pth{1+\tfrac{1}{\ln \rho}} \sum_{i=1}^n \limits\frac{\deg(v_i)}{2q(v_i)}} \\
& \ge  q^n  \exp\pth{-\frac{1}{2} \sum_{i=1}^n \limits \ln \deg(v_i)} = q^n D^{-n/2}.
\qedhere
\end{align*}
\end{proof}


\section{Discussion}

\subsection{Simplicity of the proof}

A classical intuition behind the result of Johansson is the following. Consider any vertex $v\in V(G)$ in a triangle-free graph G, and assume that we properly colour its neighbourhood randomly. 
Since $N(v)$ is empty, the colours drawn by each vertex in $N(v)$ are independent random variables.
The solution to the Coupon Collector Problem hence tells us that if $v$ has $k$ colours (initially) available and degree less than $\Omega(k\ln k)$, then with high probability some colours from $L(v)$ do not appear in $N(v)$. 
This shows that a random colouring ``works" locally.
We believe that our proof closely approximates this intuition. 
Following Molloy \cite{Mol19}, both Bernshteyn \cite{Ber19} and ourselves have relied on the approximation of $(1-1/t)$ by $e^{-\frac{1}{t-1}}$, where $t$ is the order of some list in $N(v)$ given a colouring of the vertices outside $N(v)$. 
This allows us to fruitfully exploit the convexity of the latter and assume that list sizes are evenly distributed across the colourings from which we sample. 
However, this approximation is only possible if $t>1$ and is wasteful if $t$ is small. 
Thus previous proofs introduced a blank colour (which lets them replace $t$ with $t+1$) and showed that however an adversary coloured $G\setminus N(v)$, a random colouring of $N(v)$ left many colours available at $v$ with high probability.
In particular this required showing that, having fixed any colouring of $G\setminus N(v)$, the appearance of colours $x$ and $y$ in $N(v)$ were negatively correlated.
We instead require $t$ to be large enough to ensure the convex approximation is good and we banish the adversary by sampling uniformly from the set of colourings of $G\setminus v$ (which typically induce large lists by induction and \eqref{eq:markov}).
Thus the ``support" of the convex approximation, and its power, is enlarged to all colourings of $G \setminus v$ rather than than just the extensions of a particular colouring $c_0$ of $G\setminus N(v)$.   
Of course, sampling from all colourings simultaneously  comes at a cost... the appearance of colours in $N(v)$ becomes tangled in a complex knot of correlations. 
Crucially, we do not attempt to untangle this knot and instead  slice through it by focusing only on the first moment. 
We believe that our technique could be adapted in other contexts where complex correlations obstruct intuitive arguments.
Can one systemically replace some family of R{\"o}dl nibble arguments using our technique? An affirmative answer would be extremely exciting.

\subsection{A possible tightening of the bound}
In a previous version of our work, we have claimed that the events $x\in L_\bc(v)$ for every colour $x\in L(v)$, given a vertex $v\in V(H)$ and a uniform random colouring $\bc \in \C(H)$, are negatively correlated. As was pointed out to us by Anton Bernshteyn, this claim is false, although it holds once a colouring of $H \setminus N[v]$ is fixed.
If one could prove that $\ell_\bc(v)$ is highly concentrated around its mean, then this would imply a tightening of the bound in Theorem~\ref{thm:main}. However, it is not clear whether such a high concentration holds for a uniform random colouring $\bc \in \C(H)$.

\section{Acknowledgement}
A substantial part of this work has been done during the online workshop \emph{Entropy Compression and Related Methods} which took place in March 2021. We are thankful to the organisers, Ross J. Kang and Jean-Sébastien Sereni, and more generally to the Sparse Graph Coalition for making that work possible. 
We are grateful to Felix Joos for proofreading the early versions of that paper.
We also thank Matthieu Rosenfeld and Mike Molloy for insightful discussions.
\bibliography{moment}
\bibliographystyle{habbrv}

\end{document}